\documentclass[journal]{IEEEtran}
\usepackage{color} 
\usepackage{ifpdf}

\usepackage{cite}

\ifCLASSINFOpdf
  \usepackage[pdftex]{graphicx}

\else
 
\fi

\usepackage{graphicx}
\usepackage{epstopdf}
\usepackage{subfigure}
\usepackage{setspace}
\usepackage{amsmath}
\usepackage{amsfonts}
\usepackage{amssymb}
\interdisplaylinepenalty=2500

\ifCLASSOPTIONcompsoc
  \usepackage[caption=false,font=normalsize,labelfont=sf,textfont=sf]{subfig}
\else
  \usepackage[caption=false,font=footnotesize]{subfig}
\fi

\usepackage{fixltx2e}
\usepackage{footnote}
\makesavenoteenv{tabular}

\ifCLASSOPTIONcaptionsoff
  \usepackage[nomarkers]{endfloat}
 \let\MYoriglatexcaption\caption
 \renewcommand{\caption}[2][\relax]{\MYoriglatexcaption[#2]{#2}}
\fi

\usepackage{url}

\allowdisplaybreaks[4]

\setlength{\abovedisplayskip}{2pt}
\setlength{\belowdisplayskip}{2pt}

\usepackage{comment}
\usepackage{amsthm}
\usepackage{ifthen}

\hyphenation{op-tical net-works semi-conduc-tor}

\newtheorem{proposition}{Proposition}

\newboolean{showcomments}
\setboolean{showcomments}{false}
\newcommand{\fliu}[1]{\ifthenelse{\boolean{showcomments}}
	{ \textcolor{red}{(FL:  #1)}}{}}
\newcommand{\gjp}[1]{\ifthenelse{\boolean{showcomments}}
	{ \textcolor{blue}{(Gjp:  #1)}}{}}

\def\ba{\begin{array}}
	\def\ea{\end{array}}
\newcommand{\beq}{\begin{equation}}
\newcommand{\eeq}{\end{equation}}
\newcommand{\bq}{\begin{eqnarray}}
\newcommand{\eq}{\end{eqnarray}}
\newcommand{\bqn}{\begin{eqnarray*}}
	\newcommand{\eqn}{\end{eqnarray*}}
\newcommand{\bee}{\begin{enumerate}}
	\newcommand{\eee}{\end{enumerate}}
\newcommand{\bi}{\begin{itemize}}
	\newcommand{\ei}{\end{itemize}}

\begin{document}
	
	\title{Mitigating Blackout Risk via Maintenance : Inference from Simulation Data }

	\author{Jinpeng~Guo,~\IEEEmembership{Student Member,~IEEE, }
		Feng~Liu,~\IEEEmembership{Member,~IEEE, }
        Xuemin~Zhang,~\IEEEmembership{Member,~IEEE, }
        Yunhe~Hou,~\IEEEmembership{Senior Member,~IEEE, }        
		and~Shengwei~ Mei,~\IEEEmembership{Fellow,~IEEE}
		\thanks{Manuscript received XXX, XXXX; revised XXX, XXX. \textit{(Corresponding author: Feng Liu)}.}
       	\thanks{J. Guo, F. Liu, X. Zhang and S. Mei are with the State Key Laboratory of Power Systems, the Department of Electrical Engineering, Tsinghua University, Beijing, 100084, China  
       		.} 
       	\thanks{Y. Hou is with the Department of Electrical and Electronic Engineering,
       		The University of Hong Kong, Hong Kong}

        }
	
	\markboth{REPLACE THIS LINE WITH YOUR PAPER IDENTIFICATION NUMBER (DOUBLE-CLICK HERE TO EDIT}%
	{Shell \MakeLowercase{\textit{et al.}}:  Bare Demo of IEEEtran.cls for Journals}
	
	\maketitle
	
	\begin{abstract}
Whereas maintenance has been recognized as an important and effective means for risk management in power systems, it turns out to be intractable if cascading blackout risk is considered due to the extremely high computational complexity. In this paper, based on the inference from the blackout simulation data, we propose a methodology to efficiently identify the most influential component(s) for mitigating cascading blackout risk in a large power system. To this end, we first establish an analytic relationship between maintenance strategies and blackout risk estimation by inferring from the data of cascading outage simulations.  Then we formulate the component maintenance decision-making problem as a nonlinear 0-1 programming. Afterwards, we quantify the credibility of blackout risk estimation, leading to an adaptive method to determine the least required number of simulations, which servers as a crucial parameter of the optimization model. Finally, we devise two  heuristic algorithms to find approximate optimal solutions to the model with very high efficiency. Numerical experiments well manifest the  efficacy and high efficiency of our methodology.    	
	\end{abstract}
	
	\begin{IEEEkeywords}
		Cascading failure; blackout risk; component maintenance; data inference
	\end{IEEEkeywords}
		
	\IEEEpeerreviewmaketitle

	\section{Introduction}
	\IEEEPARstart{U}{nder} certain conditions, system  disturbances or component outages in a power system can initiate a sequence of component failures, i.e., cascading outages, leading to serious consequence, even catastrophic blackouts. Although the probability of such catastrophic blackouts is  tiny,  theoretical research and practical experience have revealed that such blackout risk cannot be ignored\cite{r1,r2,r3,r4}. 
	
	Intuitively, maintaining component  is  seen as an effective means  to mitigate cascading blackout risk since it  directly reduces the  probability of component failures \cite{r5,r6}. 
	 In practice, whereas there are usually a huge number of components in a power system, a few of them have much larger influence on the blackout risk than the others\cite{r7,r8,r9}. Therefore, choosing those most influential components for maintenance can greatly facilitate effective mitigation of  blackout risk with limited resource. This idea is not new, which has been extensively deployed in conventional reliability-centered maintenance (RCM) or risk-based maintenance (RBM). 	In \cite{r10}, a risk-based resource optimization model for transmission system maintenance is proposed, where both maintenance strategies and corresponding risk reductions serve as input data. From a different perspective, risk could be defined and calculated via scenario enumeration. And those components associated with high-risk scenarios are selected for maintenance in \cite{r11}. More rigorously,  \cite{r12} employs a 0-1 integer programming to optimize  system risk with  resource limitation in power systems, where the system risk is defined as the sum of component risks and is calculated by enumeration. A similar optimization model is presented in \cite{r6}. In all these methods, maintenance strategies are selected based on risk evaluation with respect to the considered maintenance strategies.
 	
 	The aforementioned methods have been demonstrated to work well in both RCM and RBM.  Unfortunately, when the cascading blackout is considered, it is not the case. The main reason relies on the matter of fact that it is extremely computational complex to evaluate cascading blackout risk with respect to  maintenance strategies. 
	As well known, the propagation of a cascading outage is a complicated dynamic process involving  various random factors, making it impossible to evaluate blackout risk analytically.  Hence Monte Carlo (MC) method is often employed to estimate cascading blackout risk as a surrogate. To achieve a credible estimation, however, MC method needs to generate a large number of samples via cascading blackout simulations \cite{r13,r14,r15}, which is extremely time consuming\cite{r16}. 
	If a large number of components are taken into account, MC method may fail due to the notorious  ``curse of dimensionality''. When maintenance is considered, the problem turns out to be much worse. Since there is lack of analytic relationship to bridge the estimated blackout risks with component failure probabilities that vary with maintenance strategies,  samples used to estimate the blackout risk with respect to a specific maintenance strategy cannot be used for another. That implies, whenever the maintenance strategy changes, all blackout samples have to be completely regenerated by conducting blackout simulations. Considering the huge number of components and possible maintenance strategies, both sample generation and blackout risk estimation are extremely time-consuming, making the corresponding maintenance optimization problem intractable.    
	
	Regarding to this obstacle,  \cite{r17} exploits the underlying information of cascading blackouts hidden in the data of cascading blackout simulations,  leading to a semi-analytic method to characterize the relationship between (unbiased) blackout risk estimation and component failure probabilities. It opens up the possibility to directly estimate blackout risk under varying component failure probabilities, with no need of regenerating any samples. That essentially motivates an efficient risk evaluation approach to cope with varying maintenance strategies. In this paper, we extend the work to address the optimal component maintenance problem considering cascading blackout risk. Major contributions of our work are threefold:
		\begin{enumerate}
		\item  Inferring from cascading blackout simulation data, an analytic relationship between estimated blackout risk and maintenance strategy is established. Based on it, we formulate the optimal component maintenance problem as a  nonlinear 0-1 programming. Particularly, the evaluation of cascading blackout risk with respect to varying maintenance strategies, at the first time, is explicitly  expressed based on simulation data in the model. 
		\item  In order to guarantee the credibility of the blackout risk evaluation when solving the proposed optimization model, we propose an adaptive method to determine an appropriate sample size via credibility analyses based on the inference from simulation data. 
		\item  The proposed optimization model is a high-dimensional nonlinear 0-1 programming, which is NP-hard  with a  complicated objective function. To solve this nontrivial optimization problem, we propose two simple heuristic algorithms to search nearly-optimal solutions to the problem with very high efficiency.
	\end{enumerate}	  
	
	The rest of this paper is organized as follows. Section II gives  basic definitions and notations, based on which the optimization problem is formulated. In Section III, according to the credibility analyses of  blackout risk estimation, the critical parameter, sample size, is determined. Two high-efficiency heuristic algorithms and the holistic procedure are presented in Section IV. In Section V, case studies are presented. Finally, Section VI concludes the paper with remarks.

\section{Problem Statement and Formulations}

In this work, we aim at  mitigating  cascading blackout risk via maintaining a few components in the system. Basically, the influence of maintenance on component failure probabilities can be estimated based on the historical data and experience. Therefore, the critical issue is how to characterize the relationship between component failure probabilities and  blackout risk (or more precisely, the estimation of blackout risk). As long as the relationship is obtained, it is straightforward to decide maintenance strategies  by enumeration or optimization. Obviously, an analytic relationship is preferable for  maintenance strategy optimization. In this context, we employ the method given in \cite{r17} to further analytically characterize the relationship between estimated blackout risk and maintenance strategies through the inference from  blackout simulation data, and formulate the optimization problem explicitly.     
          
\subsection{Definitions}
We start with the basic definitions of  cascading outages, component failure probability function as well as blackout risk.

In light of \cite{r16}, an $n$-stage cascading outage can be defined as a Markov sequence denoted by 
\begin{equation}\nonumber
Z:= \{ {X_0, X_1, ..., X_j, ..., X_n}, X_j\in \mathcal{X}, \forall j\in \mathbb{N}  \}
\end{equation}
with respect to a probability series, $g(Z)$. Here, $\mathbb{N}:=\{1,2,\cdots, n\}$ is the set of cascading stages; $j$ is the stage label; $X_j$ is the state variables of the system at stage $j$, which can be ON/OFF statues of components, power injections at each bus, etc. Particularly, $X_0$ is the initial system state of the cascading outage, which is assumed to be deterministic. The state space $\mathcal{X}$ is assumed to be finite. Regarding a specific cascading outage, $z=\{x_0, ..., x_j, ..., x_n\}$. And its probability (series) is denoted by $g(z):=g(x_0, ..., x_j, ..., x_n)$. At  stage $j$, the failure probability of component $k$ is denoted by
\begin{equation} \label{eq2}
	\varphi_k(x_j):=\mathbf{Pr}(\text{component}\; k\; \text{fails at}\; x_j \; ) 
\end{equation}
where $\varphi_k$ is referred to as \emph{component failure probability function}. It is determined by inherent characteristics of the component, e.g., component type, operating condition, etc. If a certain component $k$ is maintained,  the failure probability function will change accordingly. We denote the failure probability function of component $k$ after maintenance by $\bar{\varphi}_k$.

Noting that load shedding can be involved in a cascading outage, we use a general definition of cascading blackout risk associated with load shedding \cite{r16}. Specifically, denote $Y$ as the load shedding of a cascading outage. It is a random variable and can be further regarded as a function of the corresponding cascading outage, i.e., $Y=h(Z)$. Then the blackout risk with respect to $g(Z)$ and a given load shedding level $Y_0$ is 
\begin{equation} \label{eq4}
		R_g(Y_0)  :=  \mathbb{E}(Y\cdot\delta_{\{Y\ge{Y_0}\}})=\sum\limits_{z \in \mathcal{Z}} {g(z)h(z){\delta _{\{ h(z) \ge {Y_0}\} }}} 
		\end{equation}		
where $\mathcal{Z}$ is the set of all possible cascading outages; $\delta_{\{Y\ge{Y_0}\}}$  an indicator function of the events $\{Y\ge{Y_0}\}$, given by
\begin{equation*} 
\delta_{\{Y\ge{Y_0}\}}:=\left\{\begin{array}{lll}
1 & &\text{if}\quad Y\ge{Y_0};\\
0 & &\text{otherwise}.
\end{array} \right.
\end{equation*}

The cascading blackout risk defined in Eq. \eqref{eq4} is indeed the expectation of load shedding beyond the given level, $Y_0$. When $Y_0=0$, it is  the traditional definition of blackout risk. When $Y_0>0$, it is the risk of those events with serious consequences, specifically, with  a load shedding greater than $Y_0$. 
\subsection{Analytic Relationship Inference from Simulation Data}
Invoking the Markov property and conditional probability formula, $g(z)$ can be rewritten as
\begin{equation} \label{eq1}
g(z) = g({x_n}, \cdots, {x_1},{x_0})={\prod\limits_{j=0}^{n-1}{g_{j+1}(x_{j+1}|x_{j})}}
\end{equation} 
where $g_{j+1}({x_{j+1}}|{x_{j}})$ represents the transition probability from state $x_j$ to state $x_{j+1}$. Considering the failure components at stage $j$, \eqref{eq1} is equivalent to
		\begin{equation} \label{eq3}
		g(z)=\prod\limits_{j=0}^{n-1}\left[\prod\limits_{k \in {F_j}} {{\varphi_{k}}({x_j})} \cdot \prod\limits_{k \in {\bar{F}_j}} {(1 - {\varphi _{k}}({x_j}))} \right]		
		\end{equation}	
where $F_j$ is the component set consisting of the components that are defective at $x_{j+1}$ but work normally at $x_{j}$, while $\bar{F}_j$ consists of components that work normally at $x_{j+1}$. 

Since maintenance only influences some components in the system, we consider the items related to a specific component $k \in K$ in \eqref{eq3} and define
\begin{small}	
	\begin{equation} \label{eq8}
	\Gamma(\varphi_{k},z) := \left\{ {\begin{array}{*{20}{c}}
		\prod\limits_{j=0}^{n-1}{(1 - {\varphi _k}({x_j}))}&: \text{if}\quad n_k=n\\
		{\varphi _k}({x_{n_k}}) \prod\limits_{j=0}^{n_k-1}{(1 - {\varphi _k}({x_j}))}&: \text{otherwise}\\
		\end{array}} \right.
	\end{equation}
\end{small}
where $K$ is the set including all components; $n_k$ is the stage at which component $k$ fails. Particularly, if component $k$ does not fail in the whole cascading process, let $n_k:=n$. Then

\begin{equation} \label{eq9}
g(z)=\prod\limits_{k\in K}{\Gamma(\varphi_k,z)}
\end{equation}
Substituting \eqref{eq9} into \eqref{eq4} yields 

\begin{equation} \label{eq5}
R_g(Y_0)=\sum\limits_{z \in \mathcal{Z}} {\left(h(z){\delta _{\{ h(z) \ge {Y_0}\} }}\cdot\prod\limits_{k\in K}{\Gamma(\varphi_k,z)}\right)}
\end{equation}

Eq.\eqref{eq5} reveals the inherent relationship between $R_g(Y_0)$ and $\varphi_k$. However, since $|\mathcal{Z}|$ and $|K|$ \footnote{$|\cdot|$ is the operator stands for the cardinality of a set.} may be very large, it cannot be directly used in optimization. Alternatively, we use an approximation of $R_g(Y_0)$ in terms of a set of samples, which is given by
\begin{equation} \label{eq6.1}
\hat{R}_g(Y_0)=\frac{1}{N}\sum\limits_{i = 1}^N {{h(z^i)}{\delta _{\{ {h(z^i)} \ge {Y_0}\} }}}
\end{equation}
In \eqref{eq6.1}, $N$ is the number of independent identically distributed (i.i.d.) samples which are generated using specific blackout simulation models with respect to $g(Z)$ (or more precisely, $\varphi_k, k \in K$); $z^i= \{x^i_0, ..., x^i_j, ..., x^i_{n^i}\}$ is the $i$-th sample, where $n^i$ is the number of stages in $i$-th sample. Particularly, we define $Z_g:=\{z^i,i=1,\cdots,N\}$,which stands for the sample set generated with respect to $g(Z)$.
\fliu{so what does $Z_g$ mean? Does it stand for the sample set generated with respect to $g(Z)$?}

When some  $\varphi_k$ change due to maintenance, $g(Z)$ will be converted into another probability series. We  denote the new probability series after maintenance by $f(Z)$. Regarding $Z_g$,  \cite{r17} says that the unbiased estimation of $R_f(Y_0)$ is given by 
\begin{equation} \label{eq6}
\hat{R}_f(Y_0)=\frac{1}{N}\sum\limits_{i = 1}^N {w(z^i){h(z^i)}{\delta _{\{ {h(z^i)} \ge {Y_0}\} }}}
\end{equation} 
where $w(z^i)$ is the sample weight of $z^i$ and is defined as 
\begin{equation} \label{eq7}
w(z^i):=\frac{f(z^i)}{g(z^i)}, \quad (\forall z^i\in\mathcal{Z})
\end{equation} 

According to \eqref{eq6}, it is interesting that  $\hat{R}_f(Y_0)$ only requires the information of  $z^i$ which is generated in the blackout simulations with respect to $g(Z)$.  It implies that, when $g(Z)$ changes to $f(Z)$, the blackout risk can be directly obtained using the information of $
Z_g$, other than regenerating samples via blackout simulations. Therefore, it enables a way to explicitly express the blackout risk with respect to maintenance strategies,  which will greatly facilitate the formulation of optimal maintenance decision-making problem, as we explain.

\subsection{Formulation of Maintenance Strategy Optimization}
We use a binary variable $m_k$ to represent the maintenance status of component $k$. If component $k$ is maintained, $m_k=1$; otherwise, $m_k=0$. Then we use a
vector $M:=\{m_k,k \in K\}$ to represent the maintenance strategy. For a specific sample $z^i$, we have  
\begin{equation} \label{eq10}
f(z^i)=\prod\limits_{k\in K}{\left[m_k\Gamma(\bar{\varphi}_k,z^i)+(1-m_k)\Gamma(\varphi_k,z^i)\right]} 
\end{equation}

Substituting \eqref{eq10} into \eqref{eq6} yields  \fliu{Please carefully check the following question. It looks problematic.}
\begin{equation} \label{eq11}
\begin{array}{ll}
\hat{R}_f(Y_0)&\\
=\frac{1}{N}\sum\limits_{i = 1}^N {{\frac{ \prod\limits_{k\in K}{\left[m_k\Gamma(\bar{\varphi}_k,z^i)+(1-m_k)\Gamma(\varphi_k,z^i)\right]}  }{ \prod\limits_{k\in K}{\Gamma(\varphi_k,z^i)}  } h(z^i)}{\delta _{\{ {h(z^i)} \ge {Y_0}\} }}}\\
=\frac{1}{N}\sum\limits_{i = 1}^N {\left[{ \left(\prod\limits_{k\in K^*}{ 1+m_k\left(\frac{\Gamma(\bar{\varphi}_k,z^i)}{\Gamma(\varphi_k,z^i)}-1\right) }\right)   h(z^i)}{\delta _{\{ {h(z^i)} \ge {Y_0}\} }}\right]}\\
\end{array}
\end{equation} 
where $K^*$ is the set including all the components which are available for maintenance.  Obviously, $K^* \subseteq  K$ and $m_k=0, \forall k \notin K^*$.
   
Eq.\eqref{eq11} reveals the relationship between estimated blackout risk and maintenance strategies. Moreover, it essentially  provides the estimation of cascading blackout risk with an explicit expression.
To minimize the  blackout risk with a limited number of components considered in maintenance, an optimization problem is formulated as
\fliu{please check the equation below}
    \begin{equation} \label{eq12}
		\begin{array}{l}
        \min \limits_{m_k }  \frac{1}{N}\sum\limits_{i = 1}^N {\left[{ \left(\prod\limits_{k\in K^*}{ 1+m_k\left(\frac{\Gamma(\bar{\varphi}_k,z^i)}{\Gamma(\varphi_k,z^i)}-1\right) }\right)   h(z^i)}{\delta _{\{ {h(z^i)} \ge {Y_0}\} }}\right]} \\
        \\
        s.t.  \sum\limits_{k\in K^*}{m_k} \leq M_{max}  \\

		\end{array}
	\end{equation}
where $m_k, k \in K^*$ are the decision variables. $M_{max}$ is a predefined parameter that stands for the maximum number of components considered in maintenance. $\Gamma(\varphi,z^i)$, $\Gamma(\bar{\varphi}_k,z^i)$, $h(z^i)$ and $\delta _{\{ {h(z^i)} \ge {Y_0}\} }$ are  variables rely on samples. For simplicity, we define an $N$-dimension vector $C$ and two $|K^*| \times N$ matrices, $P$ and $Q$, which are defined by
\begin{equation*}
\begin{array}{rlll}
 C_i&:=&h(z^i)\delta _{\{ {h(z^i)} \ge {Y_0}\} }& \forall i  \\ P_{ki}&:=&\Gamma({\varphi}_k,z^i)&\forall i,k\\
 Q_{ki}&:=&\Gamma(\bar{\varphi}_k,z^i) &\forall i,k
\end{array}
\end{equation*}

The optimization model \eqref{eq12} is a high-dimensional 0-1 programming. To solve this nontrivial optimization problem, the following two issues should be well addressed.

\subsubsection{Determining appropriate sample size, $N$}
In the optimization model \eqref{eq12}, the sample size, i.e., $N$, is a crucial parameter, since the  objective function  is an unbiased estimation of  blackout risk and the estimation error relies on $N$. To guarantee the credibility of the estimation, a large enough $N$ is necessary. On the other hand, a too large $N$ will remarkably increase the computational burden. Unfortunately, 
it is not trivial to determine an appropriate sample size. In this regard, we analyze the variance of blackout risk estimation based on the inference from simulation data, leading to an adaptive method to find a suitable sample size  that achieves a good trade-off between estimation accuracy and computation complexity. Details will be given in Sections \ref{sec:credibility} and \ref{subsec: procedure}.
\subsubsection{Reducing computational complexity}
Even if an appropriate sample size $N$ has been determined, solving the optimization problem \eqref{eq12} still faces with great challenge. Note that the number of decision variables in \eqref{eq12}, which equals $|K^*|$,  could be very large. Moreover, there are $2^{|K^*|}-1$ product terms of decision variables in the objective function. Therefore, optimization problem \eqref{eq12} is a high-dimensional  0-1 integer programming. It is NP-hard, where enumeration and conventional optimization algorithms (e.g., branch and bound) are proved powerless.  In this context, we devise two simple but efficient algorithms to find nearly-optimal solutions of \eqref{eq12} in a heuristic manner. Details  will be given in Section \ref{sec:alg}. 

\section{Determining $N$ Based on Credibility  Analysis}\label{sec:credibility}
Due to the inherent uncertainty of samples in $Z_g$, the estimated blackout risk $\hat{R}_f(Y_0)$ given in \eqref{eq6} always has certain estimation error. Generally, increasing sample size helps reduce estimation error. However, for determining an appropriate sample size of $Z_g$, i.e., $N$, we need to characterize the influence of $N$ on the estimation error via variance analysis based on the inference from simulation data. 

\subsection{Variance-based Credibility Analyses}
We first consider the relative error bound of \eqref{eq6} with respect to a specific $Z_g$. Specifically, denote $\epsilon$ as the relative error bound of $\hat{R} _f(Y_0)$ with a confidence level $\beta$. Then $\epsilon$ should make the probability of $R_f(Y_0)$ within $I:=[(1-\epsilon)\hat{R}_{f}(Y_0),(1+\epsilon)\hat{R}_{f}(Y_0)]$ with a probability $\beta$, i.e.,
 \begin{equation}\label{eq17}
\mathbf{Pr}\left( \frac{|R_{f}(Y_0)-\hat{R}_{f}(Y_0)|}{R_{f}(Y_0)} \le \epsilon \right) = \beta 
\end{equation}

Here $2\epsilon$ is defined as the relative length of error bound, which is used to quantify the credibility of  blackout risk estimation, as we explain. Note that \eqref{eq17}  is equivalent to
\begin{equation}\label{eq18}
\mathbf{Pr}\left( -\frac{\epsilon R_{f}(Y_0)}{\sqrt{D_f(Y_0)}} \le \frac{R_{f}(Y_0)-\hat{R}_{f}(Y_0)}{\sqrt{D_f(Y_0)}} \le \frac{\epsilon R_{f}(Y_0)}{\sqrt{D_f(Y_0)}} \right) = \beta 
\end{equation} 
where $D_f(Y_0)$ is the estimation variance of $\hat{R} _f(Y_0)$ given by \eqref{eq6}. Invoking the Central Limit Theorem,  when $N$ is large enough, we have
\begin{equation}\nonumber
\frac{R_{f}(Y_0)-\hat{R}_{f}(Y_0)}{\sqrt{D_f(Y_0)}} \sim \mathbf{Norm}(0,1)
\end{equation}
In this context, $\epsilon$ can be determined by solving \eqref{eq18}, yielding
\begin{equation}\label{eq19}
\epsilon=\frac{\Phi^{-1}(1/2-\beta/2)\sqrt{D_f(Y_0)}}{R_{f}(Y_0)}
\end{equation}
where $\Phi$ is the cumulative density function of the standard normal distribution, and $\Phi^{-1}$ is the inverse function of $\Phi$.

Eq. \eqref{eq19} says that  a small enough  $D_f(Y_0)$ is required to ensure the credibility of blackout risk estimation. However, as the distribution of $\hat{R}_f(Y_0)$ is unknown, $D_f(Y_0)$ cannot be calculated accurately in practice.  To this end, we give a proposition for estimating $D_f(Y_0)$ based on a specific $Z_g$.

\begin{proposition}
Given $Z_g$, $g(Z)$ and $f(Z)$, an unbiased estimation of $D_f(Y_0)$ is given by
\begin{equation} \label{eq14}
\hat{D}_f(Y_0)=\frac{1}{(N-1)N}\sum\limits_{i=1}^N{\left[w(z^i)h(z^i)\delta_{\{h(z^i) \ge Y_0 \}}-\hat{R}_f(Y_0)\right ]^2}
\end{equation}

\end{proposition}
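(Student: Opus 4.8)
The plan is to recognize (\ref{eq14}) as the standard sample-variance estimator applied to the i.i.d.\ random variables $\xi_i := w(z^i)h(z^i)\delta_{\{h(z^i)\ge Y_0\}}$, and then invoke two classical facts: the variance of a sample mean of i.i.d.\ draws is $(1/N)$ times the per-sample variance, and the bias-corrected sample variance $\frac{1}{N-1}\sum_i(\xi_i-\bar\xi)^2$ is an unbiased estimator of that per-sample variance. Combining the two with the $1/N$ factor gives exactly the claimed $\hat D_f(Y_0)$ as an unbiased estimator of $D_f(Y_0)$, which is by definition the variance of $\hat R_f(Y_0)$.

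First I would fix notation: write $\xi_i := w(z^i)h(z^i)\delta_{\{h(z^i)\ge Y_0\}}$ for $i=1,\dots,N$. By (\ref{eq6}), $\hat R_f(Y_0)=\frac1N\sum_{i=1}^N\xi_i$, and by construction the $z^i$ are i.i.d.\ samples drawn from $g(Z)$, so the $\xi_i$ are i.i.d.\ as well; call their common mean $\mu$ and common variance $\sigma^2:=\mathbb{E}[(\xi_1-\mu)^2]$. The result of \cite{r17} quoted above already asserts $\mathbb{E}[\xi_i]=R_f(Y_0)$, i.e.\ $\mu = R_f(Y_0)$ and $\hat R_f(Y_0)$ is unbiased; what remains is to handle the second moment. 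Step one: show $D_f(Y_0):=\mathrm{Var}(\hat R_f(Y_0))=\sigma^2/N$, which is immediate from independence. Step two: show $\mathbb{E}\big[\frac{1}{N-1}\sum_{i=1}^N(\xi_i-\hat R_f(Y_0))^2\big]=\sigma^2$; this is the textbook unbiasedness of the sample variance, proved by expanding $\sum_i(\xi_i-\bar\xi)^2 = \sum_i(\xi_i-\mu)^2 - N(\bar\xi-\mu)^2$, taking expectations to get $N\sigma^2 - N\cdot(\sigma^2/N) = (N-1)\sigma^2$, and dividing by $N-1$. Step three: divide the estimator of $\sigma^2$ by $N$ to match $D_f(Y_0)=\sigma^2/N$, which yields precisely the prefactor $\frac{1}{(N-1)N}$ appearing in (\ref{eq14}), and conclude $\mathbb{E}[\hat D_f(Y_0)] = D_f(Y_0)$.

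I would also note the one hypothesis that makes this clean: the $\xi_i$ must genuinely be i.i.d., which holds because the samples $z^i\in Z_g$ are generated i.i.d.\ with respect to $g(Z)$ (as stated after (\ref{eq6.1})) and $w(\cdot),h(\cdot),\delta_{\{\cdot\}}$ are deterministic functions, so $\xi_i = (w\cdot h\cdot\delta)(z^i)$ inherits the i.i.d.\ property; one should check only that $\sigma^2<\infty$, which is guaranteed since $\mathcal{X}$ is finite and hence there are finitely many possible values of $z$ (so all moments exist). Honestly there is no substantive obstacle here — the statement is a direct specialization of the classical unbiased sample-variance formula, and the only thing worth being careful about is bookkeeping the two separate $1/N$ effects (one from $\mathrm{Var}(\bar\xi)=\sigma^2/N$, one from converting the $\sigma^2$-estimator into a $D_f$-estimator) so that they combine into the single $\frac{1}{(N-1)N}$ coefficient rather than being double-counted or dropped.
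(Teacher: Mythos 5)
Your proposal is correct and follows essentially the same route as the paper: define the i.i.d.\ summands $w(z^i)h(z^i)\delta_{\{h(z^i)\ge Y_0\}}$ (the paper calls them $L_i(Y_0)$), reduce $D_f(Y_0)$ to $\tfrac{1}{N}$ times their common variance, and apply the unbiased sample-variance estimator with the extra $\tfrac{1}{N}$ factor. The only difference is cosmetic --- you spell out the textbook proof of the sample variance's unbiasedness and note the finiteness of moments, where the paper simply cites a reference.
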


\begin{proof}

We first define random variable $L_i(Y_0)$ as $L_i(Y_0):= \frac{f(z^i)}{g(z^i)}h(z^i)\delta_{\{h(z^i) \ge Y_0 \}}$, $i=1,\cdots, N$. Since $z^i$ is i.i.d., $L_i(Y_0)$ is i.i.d. as well. Therefore, the variance of random variables $L_i(Y_0)$ are the same. Specifically, we denote the variance of $L_i$ by $d_f(Y_0)$. Then we have  
\begin{equation} \label{eq13}
D_f(Y_0)=\mathbb{D}\left(\frac{1}{N}\sum\limits_{i=1}^N{L_i(Y_0)}\right)
=\frac{1}{N}\mathbb{D}({L_i(Y_0)})
=\frac{1}{N}d_f(Y_0)
\end{equation}   
where $\mathbb{D}$ is an operator of variance calculation. The second equation holds by noting $L_i(Y_0)$,$i=1,\cdots, N$ are i.i.d.. 

According to the definition of sample variance\cite{r19}\fliu{which equation? }, the unbiased estimation of $d_f(Y_0)$ based on $Z_g$ is given by 
\begin{equation} \label{eq16}
\hat{d}_f(Y_0)=\frac{1}{N-1}\sum\limits_{i=1}^N{\left
	[w(z^i)h(z^i)\delta_{\{h(z^i) \ge Y_0 \}}-\hat{R}_f(Y_0)\right]^2}
\end{equation}
Therefore, substituting \eqref{eq16} into \eqref{eq13}, the unbiased estimation of $D_f(Y_0)$ can be given by \eqref{eq14}.
\end{proof}

Combining Eqs. \eqref{eq6}, \eqref{eq19} and \eqref{eq14}, $\epsilon$ can be estimated by
\begin{equation}\label{eq20}
\hat{\epsilon} = \frac{\Phi^{-1}(1/2-\beta/2)\sqrt{\hat{D}_f(Y_0)}}{\hat{R}_{f}(Y_0)}
\end{equation}
And $[(1-\hat{\epsilon})\hat{R}_f(Y_0),(1+\hat{\epsilon})\hat{R}_f(Y_0)]$ can be used as a data-based approximate error bound of $\hat{R}_f(Y_0)$. 

On the other hand, according to Eqs. \eqref{eq19} and \eqref{eq13}, we have

\begin{equation}\label{eq15}
\epsilon \propto \frac{\sqrt{d_f(Y_0)}}{\sqrt{N}R_f(Y_0)}
\end{equation}

Since $d_f(Y_0)$ is deterministic for specific $Z_g$ and $f(Z)$, when the error bound of blackout risk estimation in \eqref{eq9} cannot satisfy the predefined requirement, the sample size $N$ should be enlarged and $Z_g$ should be updated, as we explain next.

\subsection{Determining the Sample Size $N$}
In this subsection, we utilize the previous credibility analyses to determine an appropriate $N$ based on the inference from simulation data in $Z_g$. Basically, if the estimation error bound is determined (according to the requirement of practical utilization), say, $\bar{\epsilon}=10\%$, then \eqref{eq20} gives
\begin{equation}\label{eq21}    
\bar{D}_f(Y_0) =\left (\frac{\bar{\epsilon} \hat{R}_{f}(Y_0)}{\Phi^{-1} (1/2-\beta/2) }\right)^2
\end{equation} 
In Eq. \eqref{eq21}, $\bar{D}_f(Y_0)$ stands for the maximal estimation variance to make the relative error within a given error bound $\bar{\epsilon}$ with a confidence level of $\beta$. 

To ensure that $D_f(Y_0)$ is less  $\bar{D}_f(Y_0)$, the required sample size $\bar{N}$ can be obtained by using \eqref{eq13} and \eqref{eq16}, leading to  
\begin{equation}\label{eq22}
\bar{N}=\frac{\hat{d}_f(Y_0)}{\bar{D}_f(Y_0)}=\frac{\hat{d}_f(Y_0)}{ \hat{R}_{f}^2(Y_0)}\left(\frac{\Phi^{-1} (1/2-\beta/2)}{\bar{\epsilon}}\right)^2
\end{equation}
Eq. \eqref{eq22} indicates that, when $N \le \bar{N} $, the sample size is not large enough to guarantee the credibility of blackout risk estimation and more samples are needed to reduce the estimation variance. It provides the optimization problem \eqref{eq12} with a systematic method to determine an appropriate sample size. The corresponding algorithm  can be easily implemented in practice, which will be presented in  Section \ref{subsec: procedure}. 

It is worthy of nothing that both $\hat{d}_f(Y_0)$ and $\hat{R}_f(Y_0)$ in \eqref{eq22} are calculated only based on the sample set $Z_g$. Hence $\bar{N}$ is essentially inferred from simulation data in $Z_g$. And there is no need to regenerate any additional samples as long as the sample size condition $N>\bar{N}$ holds. 

\section{Solution Algorithms } \label{sec:alg}
\subsection{Heuristic Algorithms}
As mentioned previously, the optimization model \eqref{eq12} is a high-dimensional nonlinear 0-1 programming, and is essentially an NP-hard problem. 
On the other hand, many nearly-optimal solutions are still of great performance to mitigate blackout risk in practice (which will be demonstrated in case studies). In this regard, here we devise two  heuristic algorithms to find nearly-optimal solutions to \eqref{eq12}  on a fixed $Z_g$.    

Since one of the big obstacles is due to the large number of decision variables, an naive thought is to reduce the number of component considered for maintenance, i.e., $|k^*|$. To this end, we employ a sensitivity-based approach to design an algorithm, as presented below.

\noindent\rule[0.25\baselineskip]{0.49\textwidth}{1pt}
{\bf Algorithm I:}

\noindent\rule{0.49\textwidth}{0.5pt}
\begin{itemize}
\small
	\item { \emph{\textbf{Step 1}: Sensitive analysis}}. 
     Construct $|K^*|$ scenarios, in each of which the $\varphi_k$ of a single component $k \in K$ changes to $\bar{\varphi}_k$. Then estimate the blackout risk with \eqref{eq6}.
	\item { \emph{\textbf{Step 2}: Scenario reduction}.}
	Choose the $M_k$ components having larger blackout risk reductions to make up the  $K_m$. Here, $M_k$ is predefined according to $M_{max}$ and experience. Particularly, $M_{max}<M_k=|K_m|<|K^*|$.	
	\item { \emph{\textbf{Step 3}: Solving optimization problem}.} 
	Substitute $K^*$ in \eqref{eq12} by $K_m$ and solve \eqref{eq12}  by enumeration.
\end{itemize}
	
\noindent\rule{0.49\textwidth}{1pt}

Since \eqref{eq6} only needs a few algebraic calculations, the blackout risk estimation of each scenario is very efficient. Suppose the average calculating time of a single scenario is $t_s$, the total calculating time is $(|K^*|+C(M_k,M_{max}))t_s$. Noting that the maximal number of components that are considered for maintenance, i.e., $M_{max}$, is often small in practice, $M_k$ can be chosen as a moderate number. Then, the number of scenarios and the computation time by enumeration is acceptable.  

Nevertheless, when $M_k$ and $M_{max}$ become lager, the scenario number $(|K^*|+C(M_k,M_{max}))$ and the computation time will grow dramatically. In this case, we propose another successive algorithm as shown below.

\noindent\rule[0.25\baselineskip]{0.49\textwidth}{1.05pt}
{\bf Algorithm II:}

\noindent\rule[0.05\baselineskip]{0.49\textwidth}{0.5pt}
\begin{itemize}
\small
	\item { \emph{\textbf{Step 1}: Initialization}}.	
    Let $K_m=\emptyset$. Its complementary set denoted by $\bar{K}_m$  equals $K^*$. 
     
	\item { \emph{\textbf{Step 2}: Blackout risk estimation}}.	
	Construct $|\bar{K}_m|$ scenarios. In each scenario, for all components in $K_m$ and a single component in $\bar{K}_m$, change their $\varphi_k$ to $\bar{\varphi_k}$ and estimate the blackout risk with \eqref{eq6}.  
	
	\item { \emph{\textbf{Step 3}: Iteration}.} 	
	Choose the scenario with the lowest blackout risk. Move corresponding component from $\bar{K}_m$ to $K_m$. If $|K_m|\,=\,M_{max}$, procedure ends; otherwise, go back to Step 2.
\end{itemize}
	
\noindent\rule[0.25\baselineskip]{0.49\textwidth}{1pt}

In  Algorithm II, the components need to be maintained are determined successively. In each round, the component that can reduce the blackout risk most effectively is selected. The total number of scenario is $(2|K^*|-M_{max}+1)M_{max}/2$. Since the scenario number increases linearly with $K^*$, the optimization process remains efficient even in a large  system.

\subsection{Procedure} \label{subsec: procedure}
The procedure of the methodology (see Fig. \ref{fig.2}) is summarized as follows. \fliu{In case we have no enough space, this figure can be removed. }

\noindent\rule[0.25\baselineskip]{0.5\textwidth}{1pt}

{\bf Procedure I:}

\noindent\rule[0.05\baselineskip]{0.5\textwidth}{0.5pt}
\begin{itemize}
\small
	\item {\bf Step 1: Initialization}. Initiate necessary data of the system. Particularly, determine $\varphi_k$ and $\bar{\varphi}_k$ for each component in the system. Moreover, the initial sample size is $N=N_0$. 
	\item {\bf Step 2: Sample generation.}  Generate $N$ samples based on the specific blackout model and $\varphi_k,k\in K$.  The sample set is $Z_g$. Based on $Z_g$, $\varphi_k$ and $\bar{\varphi}_k$, calculate matrices $C$,$P$ and $Q$. 
	\item {\bf Step 3: Optimizing maintenance strategy.} Solving the optimization problem  \eqref{eq12} by Algorithm I or II. 
	\item {\bf Step 4: Credibility evaluation.} For the optimal maintenance strategy, calculate the  necessary sample size $\bar{N}$ with \eqref{eq22}. If $\bar{N} \ge N$, generate another $\bar{N}-N$ samples and add them into $Z_g$. Then go back to Step 3; Otherwise, directly choose the optimal one according to the results in Step 3.  
\end{itemize}
\noindent\rule[0.25\baselineskip]{0.5\textwidth}{1pt}	

\begin{figure}[!t]
	\centering
	\includegraphics[width=0.85\columnwidth]{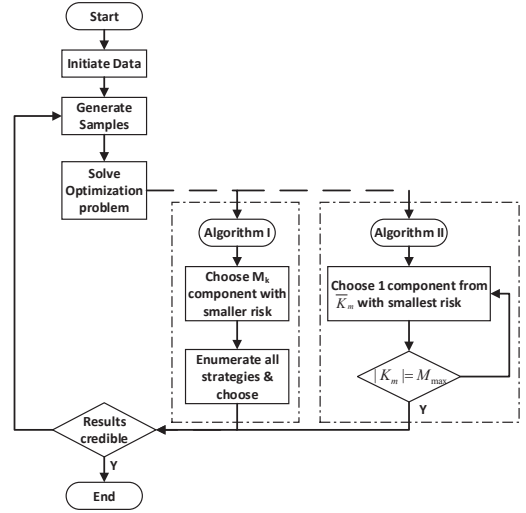}
	\caption{Flowchart of the methodology}
	\label{fig.2}
\end{figure}
Procedure I incorporates an adaptive selection of sample size based on credibility evaluation, as shown in Step 4. In this way, the sample size can be minimized, while the estimation error is guaranteed within a predefined limit. It is worthy of noting that, benefiting from the inference from blackout simulation data, all samples are generated with respect to $g(Z)$ only, other than different $f(Z)$. It leads to a significant reduction of sampling time and simplification of implementation. 

\section{Case Studies}
\subsection{Settings}
In this section, the numerical experiments are carried out on the IEEE 57-bus system and 300-bus system with a simplified OPA model omitting the slow dynamic \cite{r13}. In simulations, traditional MC is employed considering random failures of both transmission lines and power transformers. The failure probability functions of transmission lines and power transformers are the ones in \cite{r16} and \cite{r18}, respectively. Typical parameters are used as well. For simplicity, only the maintenance of power transformers is considered in this work.  

It is worth of noting that the simulation model with specific component failure probability functions mentioned above is just employed to  demonstrate the proposed method. More realistic models and settings can be adopted for coping with more realistic situations. 

\subsection{IEEE 57-bus system}
In this case, we use a small system including 53 transmission lines and 17 power transformers to demonstrate the difficulties in mitigating cascading blackout risk via component maintenance and some salient features  of the proposed method.   

\begin{figure}[!t]
	\centering
	\includegraphics[width=0.85\columnwidth]{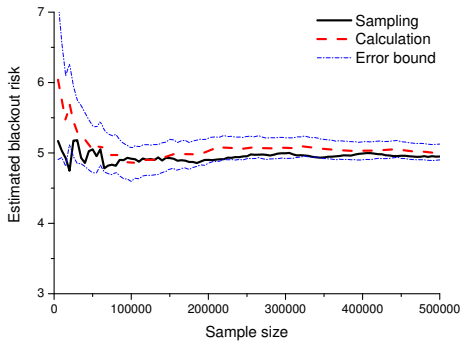}
	\caption{Blackout risk Estimation  with sampling and calculation}
	\label{fig.1}
\end{figure}

\begin{figure}[!t]
	\centering
	\includegraphics[width=0.85\columnwidth]{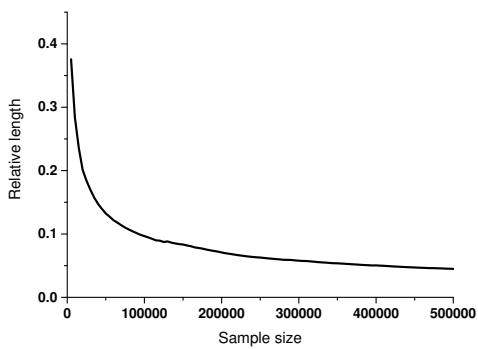}
	\caption{Relative length of the error bounds}
	\label{fig.3}
\end{figure}

\subsubsection{Blackout risk estimation}
We first show the unbiasedness of \eqref{eq6} and its inherent estimation error caused by the randomness of samples. To this end, we construct a series of $Z_g$ with increasing sample size and randomly change the $\varphi_k$ of four components. Then we calculate the blackout risk with $Y_0=200$ by using \eqref{eq6}. Meanwhile, we estimate $\epsilon$ with respect to $\beta=90\%$ by \eqref{eq20} and calculate the corresponding error bounds.  As a comparison, we directly regenerate samples with respect to new $\bar{\varphi}_k$ and estimate the blackout risk by using \eqref{eq5}. The results are presented in Fig. \ref{fig.1}. 

Fig. \ref{fig.1} shows that the estimation errors between calculation and directly sampling become smaller along with the increase of the sample size. When the sample size is large enough, they almost the same. That indicates the effectiveness of the blackout risk estimation given by \eqref{eq6}. To show this more rigorously, we calculate the relative length of the error bounds (see Fig. \ref{fig.3}), which becomes smaller along with the increase of sample size as well.  

However, it is worthy of noting that Fig. \ref{fig.3} also indicates the estimation error always exist. More importantly, when the sample size is small, the estimation error can be very large. Intuitively, the optimized maintenance strategies based on a small $Z_g$ may have a large deviation in performance of mitigating blackout risk. Therefore,  considering the calculation efficiency and the possible estimation error, the sample size of $Z_g$, i.e., $N$,  needs to be selected appropriately.    

\subsubsection{Influence of maintenance on blackout risk }
We construct a $Z_g$ including $100,000$ samples, based on which we consider the simultaneous maintenance of components . Since there are only $17$ transformers in this small system, we directly enumerate all the possible strategies and compare the performance of risk reductions. 
We first consider the maintenance of a single component. The blackout risks with respect to $Y_0=0$ and $Y_0=100$ after maintenance are given in Tab \ref{t2}. Particularly, the original blackout risks are $\hat{R}_g(0)=6.74$ and $\hat{R}_g(100)=5.14$.

\begin{table}[htp]\footnotesize
	\caption{Blackout risk after component maintenance }
	\label{t2}
	\centering
	\begin{tabular}{c|cc|c|cc}
		\hline \hline
        Compon- &Blackout &Reduction &Compon-&Blackout &Reduction\\
        ent index&risk&ratio$(\%)$&ent index&risk&ratio$(\%)$ \\
		\hline
		7  &6.55 &2.8 &7  &4.86 &5.4 \\
		6  &6.64 &1.5 &6  &5.00 &2.7\\
		2  &6.66 &1.1 &3  &5.02 &2.2\\
		3  &6.67 &1.0 &2  &5.02 &2.2\\
		5  &6.67 &1.0 &15 &5.03 &2.1\\
		...&...&...&...&...&...\\
		Mean &6.68&0.9&Mean &5.02&2.2\\
		\hline \hline
	\end{tabular}
\end{table}  

The results in Tab \ref{t2} intuitively indicate that component maintenance is an effective way to mitigate the cascading blackout risk. Moreover, there are a few critical components in the system which have much greater influence on the cascading blackout risk than the others. For example, the maintenance of NO.7 transformer can result in $2.8\%$ reduction of blackout risk with respect to $Y_0=0$, while the average is only $0.9\%$. The result confirms the effecacy of optimizing maintenance strategy in mitigating cascading blackout risks.

Then we consider to maintain four transformers simultaneously. We estimate the blackout risks with all possible maintenance strategies. The strategies with the largest reduction ratios of blackout risks with respect to two $Y_0$ are presented in Tab \ref{t3} and Tab \ref{t4}, respectively. The results show the complicated relationship between maintenance strategies (or more precisely, component failure probabilities) and blackout risk, which is one of the main difficulties. Note that the risk reduction ratios of four components are much smaller than the sum of the four individual ratios. Moreover, the optimal components to be maintained (see Tab \ref{t3} and Tab \ref{t4}) are not the simple combinations of components with smaller blackout risks (see Tab \ref{t2}). It indicates the influence of component failure probabilities on  blackout risk is essentially nonlinear, which implies that one cannot quantify the influence of multiple component maintenance directly based on that of individual component maintenance.

\begin{table}[htp]\footnotesize
	\caption{Optimal component maintenance strategies ($Y_0=0$)}
	\label{t3}
	\centering
	\begin{tabular}{c|ccc}
		\hline \hline
        Compon- &Blackout &Reduction &Sum of individual \\
        ent index&risk&ratio$(\%)$& reduction ratio$(\%)$\\
		\hline
		(7,6,2,5)   &6.505 &3.5 &6.4\\
		(7,6,5,15)  &6.506 &3.5 &6.2\\
		(7,6,2,15)  &6.507 &3.4 &6.4\\
		(7,6,3,5) &6.509 &3.4 &6.3\\
		(7,6,5,14)  &6.511 &3.4 &6.2\\
		...&...&...&...\\
		Mean &6.643&1.4&3.6\\
		\hline \hline
	\end{tabular}
\end{table}  

\begin{table}[htp]\footnotesize
	\caption{Optimal component maintenance strategies ($Y_0=200$)}
	\label{t4}
	\centering
	\begin{tabular}{c|ccc}
		\hline \hline
        Compon- &Blackout &Reduction &Sum of individual \\
        ent index&risk&ratio$(\%)$& reduction ratio$(\%)$\\
		\hline
		(7,6,5,15)   &4.819 &6.2 &12.3\\
		(7,6,2,5)    &4.822 &6.2 &12.3\\
		(7,6,3,15)   &4.823 &6.1 &12.4\\
		(7,6,3,5)    &4.823 &6.1 &12.3\\
		(7,6,2,15)   &4.823 &6.1 &12.4\\
		...&...&...&...\\
		Mean &4.991&2.9&3.6\\
		\hline \hline
	\end{tabular}
\end{table}  

Moreover,  Tab \ref{t3} and Tab \ref{t4} show that there are many nearly-optimal maintenance strategies whose risk reduction ratios are very close to the optimal one, despite the resulting components to be maintained are not exactly the same. Considering the possible estimation error of the blackout risk estimation, these nearly-optimal maintenance strategies can be deployed as well. In other words, there is no need to rigorously solve the optimization problem \eqref{eq12}, particularly when it is very time consuming. Therefore, we employ two heuristic algorithms to efficiently find (nearly-)optimal solutions in this work.   

\subsubsection{Heuristic algorithms}
For the sake of comparing the two heuristic algorithms, we choose different parameters and compare the optimal maintenance strategies obtained by them. Due to the limit of space, here we only present two typical cases. Specifically, $|Z_g|=35000$, $M_{max}=4$. Based on them, we employ the Algorithm I with $M_k=8,10$ and Algorithm II to find the optimal maintenance strategies with respect to $R_f(100)$ and $R_f(200)$, respectively. The results are shown in Tab \ref{t5} and Tab \ref{t6}. 

\begin{table}[htp]\footnotesize
	\caption{Optimal strategies with different algorithms ($Y_0=200$)}
	\label{t5}
	\centering
	\begin{tabular}{c|ccc}
		\hline \hline
		Method  & Strategy  & Risk reduction  & Number of \\
		&&($\%$)&Scenarios \\
		\hline
		Alg. I ($M_k=8$) &(7,6,2,3)& 2.2& 70 \\
		Alg. I ($M_k=10$) &(7,6,2,3)& 2.2& 210 \\
		Alg. II &(7,6,2,3)& 2.2& 62 \\
		Enumeration  &(7,6,2,3)& 2.2& 2380 \\
		\hline \hline
	\end{tabular}
\end{table}

\begin{table}[htp]\footnotesize
	\caption{Optimal strategies with different algorithms ($Y_0=100$)}
	\label{t6}
	\centering
	\begin{tabular}{c|ccc}
		\hline \hline
		Method  & Strategy  & Risk reduction  & Number of \\
		&&($\%$)&Scenarios \\
		\hline
		Alg. I ($M_k=8$)&(6,2,5,14)&1.0&70\\
		Alg. I ($M_k=10$) &(6,2,3,7)&1.4&210\\
		Alg. II&(6,2,5,14)&1.0&62\\
		Enumeration&(6,2,3,7)&1.4&2380\\
		\hline \hline
	\end{tabular}
\end{table}

As shown in Tab \ref{t5}, both algorithms give the same maintenance strategy that achieves the global optimum, while in Tab \ref{t6} Algorithm I with $M_k=8$ and Algorithm II give sub-optimal solutions
.  The  reason relies on the inherent nonlinearity between component failure probabilities and  blackout risk, as well as inevitable uncertainty in samples. However, it is worthy of noting that, according to our experience, Algorithm I with a moderate $M_k$ and Algorithm II can find an effective maintenance strategies in most cases, which can fulfill the requirement of practical applications.  

\subsection{IEEE 300-bus system}
In this case, the complete procedure of the proposed method is tested using the data of IEEE 300-bus system with 
304  lines and 107  transformers. Specifically, we set $\epsilon=10\%$, $\beta=95\%$,  $M_{max}=8$, $M_k=20$, $N_0=5000$, $Y_0=0$. 

\subsubsection{Solving process}
We first present the solving process with Algorithm I (see Tab \ref{t7}).
\begin{table}[htp]\footnotesize
	\caption{Solving process with Algorithm I }
	\label{t7}
	\centering
	\begin{tabular}{c|cccc}
		\hline \hline
		Step  & Sample size & Strategy  & Risk & $\hat{\epsilon}$($\%$)  \\
		\hline
		1 &5000 & (6,17,46,53,68,75,88,106)    & 2.272 &41.9\\
		2 &85000 & (10,25,29,66,68,97,100,106) & 2.007 &10.3\\
		3 &95000 & (10,25,29,66,68,77,100,106) & 2.011 &9.7\\
		\hline \hline
	\end{tabular}
\end{table} 
As shown in Tab \ref{t7}, we start with a $Z_g$ including 5,000 samples. Based on it, the optimal maintenance strategy is obtained. However, the relative error requirement, $\epsilon$ cannot be satisfied. Therefore, $8,0000$ samples are added into $Z_g$ according to \eqref{eq22} in step 2. Then  the optimal maintenance strategy and $\hat{\epsilon}$ are recalculated correspondingly. Repeat this process again in step 3, the final optimal maintenance strategy is determined. The reduction ratio of blackout risk with respect to $Y_0=0$ is $21.5\%$. 

The solving process  with Algorithm II is summarized in  Tab \ref{t8}. In this case, the optimal strategies in each step are actually the same as the ones of Algorithm I. Note that the components of each strategy in Tab \ref{t8} are ordered according to the decision process. Despite the results given by the two algorithms are similar in step 2 and 3, the decision processes are different. This observation demonstrates the complicated relationship between component failure probabilities and blackout risk again.  
\begin{table}[htp]\footnotesize 
	\caption{Solving process with Algorithm II }
	\label{t8}
	\centering
	\begin{tabular}{c|cccc}
		\hline \hline
		
		Step  & Sample size & Strategy  & Risk & $\hat{\epsilon}$($\%$)  \\
		\hline
		1 &5000 &  (46,106,68,53,888,75,6,17)    & 2.272 &41.9\\
		2 &85000 & (106,25,68,66,100,29,10,97) & 2.007 &10.3\\
		3 &95000 & (106,25,68,29,66,100,77,10) & 2.011 &9.7\\
		
		\hline \hline
	\end{tabular}
\end{table} 

\subsubsection{Computation time}
We carry out all tests on a computer with an Intel Xeon E5-2670 of 2.6GHz and 64GB memory. The computation times of the two algorithms are presented in Tab \ref{t9} and Tab \ref{t10}, respectively. Note that the computation time for sampling and calculating $C,P,Q$ in each step depends on the number of additional samples, while the optimizing time depends on the number of total samples. It is observed that the computation time for sampling is much larger than others. In our cases, 107 minutes are required to complete sampling. Therefore,  it is extremely time consuming, if not impossible, to directly estimate all blackout risks with all maintenance strategies by traditional methods. Our methodology enables a very efficient estimation of blackout risk under varying maintenance strategies without regenerating any samples. In addition, the optimizing time of Algorithm II is smaller than the one of Algorithm I as fewer maintenance scenarios are involved. Whereas it may give sub-optimal solutions in some case, e.g., Tab \ref{t5}, it is preferable for large-scale systems with many candidate components considered for maintenance.

\begin{table}[htp]\footnotesize
	\caption{Computation time with Algorithm I (min.)}
	\label{t9}
	\centering
	\begin{tabular}{c|cccc}
		\hline \hline
		Step  & Sampling & Calculating $C,P,Q$  & Optimizing & Total  \\
		\hline
		1 &5.35  & 0.37  &0.08  &5.80\\
		2 &90.95 & 5.92  &0.27  &97.14\\
		3 &10.70 & 0.74  &0.31  &11.75\\
      Sum &107.00 &0.42  &0.66  &114.69\\		
		\hline \hline
	\end{tabular}
\end{table}

\begin{table}[htp]\footnotesize
	\caption{Computation time with Algorithm II (min.) }
	\label{t10}
	\centering
	\begin{tabular}{c|cccc}
		\hline \hline
		Step  & Sampling & Calculating $C,P,Q$  & Optimizing & Total  \\
		\hline
		1 &5.35  & 0.37  &$\ll 0.01$  &5.72\\
		2 &90.95 & 5.92  &$\ll 0.01$  &96.87\\
		3 &10.70 & 0.74  &$\ll 0.01$  &11.44\\
		Sum &107.00 &7.03 &$\ll 0.01$ &114.03\\
		\hline \hline
	\end{tabular}
\end{table}

\section{Conclusion with Remarks}
In this paper, we have devised  an efficient methodology  to optimize component maintenance strategies for effectively mitigating cascading blackout risk, where an analytic relationship between blackout risk estimation and maintenance strategies is revealed throught inference form blackout simulation data.  Theoretical analyses and numerical experiments manifest that:
\begin{enumerate}

\item Some components in the power systems have great influence on the propagation of cascading outages. Reducing their failure probabilities by component maintenance can significantly mitigate the cascading blackout risk. 
\item The variance-based analyses further evaluate the credibility of the risk estimation considering different maintenance strategies. Based on that, the proposed heuristic algorithms can efficiently optimize the component maintenance strategies. 
\end{enumerate}

From the case studies, it is found that the most time-consuming step is the generation of sample set. In this paper, we simply consider a conventional MC method, which is not efficient in large-scale systems. In future work, we hope to introduce some recently developed high-efficiency sampling method, such as Sequential Importance Sampling \cite{r16}, SPLITTING \cite{r20} \fliu{here should we add a reference of SPLITTING?}, to further improve the scalability and efficiency of the methodology. Another ongoing work is to include a more practical formulation of maintenance strategy optimization and corresponding solving algorithms.

\fliu{Please check the format of all references to make them in the correct form required by IEEE Journals.}

\end{document}